\numberwithin{equation}{section}
\renewcommand{\subjclassname}{%
	\textup{2010} Mathematics Subject Classification: }
\newtheorem{thm}{Theorem}[section]
\newtheorem{prop}[thm]{Proposition}
\newtheorem{lem}[thm]{Lemma}
\theoremstyle{remark}
\newtheorem{rmk}[thm]{Remark}
\theoremstyle{definition}
\newtheorem{defn}{Definition}[section]
\DeclareMathOperator{\N}{\mathbb{N}}
\DeclareMathOperator{\R}{\mathbb{R}}
\DeclareMathOperator{\cF}{\mathcal{F}}
\DeclareMathOperator{\cL}{\mathcal{L}}
\DeclareMathOperator{\fM}{\mathfrak{M}}
\DeclareMathOperator{\Lip}{Lip}
\DeclareMathOperator{\diam}{diam}
\DeclareMathOperator{\divg}{div}
\newcommand{\Norm}[2]{\left\Vert #1 \right\Vert_{#2}}
\begin{document}
\title{Atomic decomposition of finite signed measures
	on compacts of $\R^n$}
\author{Francesca Angrisani}\thanks{francesca.angrisani@unina.it}
\author{Giacomo Ascione}\thanks{giacomo.ascione@unina.it}
\author{Gianluigi Manzo}\thanks{gianluigi.manzo@unina.it}
\address{Universit\'a degli Studi di Napoli Federico II, Dipartimento di Matematica e Applicazioni ``Renato Caccioppoli'', Via Cintia, Monte S. Angelo I-80126 Napoli, Italy}
\maketitle
\begin{abstract}
	Recently there has been interest in pairs of Banach spaces $(E_0,E)$ in an $o-O$ relation and with $E_0^{**}=E$. It is known that this can be done for Lipschitz spaces on suitable metric spaces. In this paper we consider the case of a compact subset $K$ of $\R^n$ with the Euclidean metric, which does not give an $o-O$ structure, but we use part of the theory concerning these pairs to find an atomic decomposition of the predual of $Lip(K)$. In particular, since the space $\fM(K)$ of finite signed measures on $K$, when endowed with the Kantorovich-Rubinstein norm, has as dual space $Lip(K)$, we can give an atomic decomposition for this space.\\
\end{abstract}
\begin{small}
\subjclassname{46E15, 46E27, 46E35} \\
\textit{Keywords}: \textup{Lipschitz space, atomic decomposition, duality, Kantorovich-Rubinstein norm}
\end{small} \\
\section{Introduction}
L. Hanin has dedicated some papers \cite{hanin1994isometric,hanin1997duality} to the description of spaces in duality with Lipschitz spaces, namely spaces of finite signed Borel measures on compact metric spaces $K$. His results have been extended to the case of separable metric spaces (non necessarily compact) in \cite{hanin1999extension}. In what follows we will consider $K$ a compact domain in $\mathbb{R}^n$ equipped with the euclidean norm, which we denote here by $|\cdot|$. The choice of a compact domain and the Euclidean distance will be made clear in what follows. More precisely, when we endow the space $\fM(K)$ of such measures on $K$ with the so-called Kantorovich-Rubinstein norm and consider its completion, we obtain a space that is isometric to the predual of the space of Lipschitz functions of $K$.\\
The Kantorovich-Rubinstein norm (see section 2) was introduced in the context of optimal transport theory. As a matter of fact, the distance, induced by the norm, between two measures $\mu$ and $\nu$ with same total mass, i.e. $\mu(K)=\nu(K)$, is simply the cost of the optimal transport from one to the other (see next section for definitions).\\ 
Other than identifying $\fM(K)^*$ as $\Lip(K)$, passing to duals, one can also investigate embedding properties of $\fM(K)^c$, or of $\fM_0(K)^c$, in its bidual $\Lip(K)^*$, where $\fM_0(K)$ is the subspace of $\fM(K)$ containing only measures with null total mass, called balanced measures.\\
An interesting consequence of this approach is that it inspires the introduction of the dual problem in optimal transport theory. As a matter of fact, by thinking of elements in $\fM(K)$ as functionals on $\Lip(K)$ we obtain that the Kantorovich-Rubinstein norm on $\fM_0(K)$ is equal to the norm
$$\|\mu\|_{KR_0}=\sup\left\{\int_K f d\mu, \quad f \in \Lip_1(K) \right\}$$ where $\Lip_1(K)$ is the set of Lipschitz functions with Lipschitz constant $L\le 1$ (see \cite[Remark $6.5$]{villani2008optimal}). The fundamental problem of optimal transport theory, i.e. finding, if it exists, a minimizer to the minimization problem occurring in the definition of $\|\mu-\nu\|_{KR}$ with $\mu$ and $\nu$ measures with same total mass, is then equivalently formulated as a maximization problem. It was proven in \cite{BouchitteCompletion} that elements in $\fM_0(K)^c$ are precisely those for which the dual problem admits maximizers. 
Moreover, in the same paper, the space $\fM_0(K)^c$ is also characterized as the space of the distributional divergences of $L^1(K;\R^n)$ functions.\\
In this paper we will give an atomic decomposition of the spaces $\fM(K)$ and $\fM_0(K)$ by restriction of the decomposition of their completions, seen as preduals of Lipschitz spaces. We recall that the description of atomic decompositions of H\"older spaces on compact spaces was given in \cite{jonsson1990duals} and \cite{angrisani2019duality}, following different approaches; in particular, in \cite{jonsson1990duals} the atomic decomposition is closer to other "classical" examples \cite{dafni2018space,d2020atomic}, while in \cite{angrisani2019duality} a more abstract atomic decomposition is obtained. We decided to follow this second approach, based on techniques from \cite{d2020atomic}, which are inspired by the $o$--$O$ construction in \cite{Perfekt}. In particular, in the case of the distance $d_\alpha(x,y)=|x-y|^\alpha$ on a compact set of $\R^n$, the $o$--$O$ construction has already been shown in \cite{Perfekt}, while in the general framework of doubling compact metric-measure spaces it has been achieved, under some approximation hypotheses, in \cite{angrisani2019duality}, where the atomic decomposition in this case has been already exploited. However, as already stated in \cite{Perfekt}, such approximation hypotheses do not cover the case of the Euclidean distance, that is here covered without making use of the concept of a $o$--$O$ structure.\\
In particular, we will see in the third section that elements of the embedded copy of $\fM_0(K)^c$ in $\Lip(K)^*$ can be thought of as all the infinite sums of the type $$\mu=\sum\limits_{j=1}^{+\infty}  \frac{\delta_{x_j}-\delta_{y_j}}{|x_j-y_j|}\alpha_j\quad  \text{ with }\alpha_j \text{ satisfying } \quad \sum\limits_{j=1}^{+\infty} |\alpha_j|<+\infty$$
and where $\{x_j\}_{j \in \mathbb{N}}$ and $\{y_j\}_{j \in \mathbb{N}}$ are two disjoint countable dense subsets of $K$.
These infinite sums are viewed as bounded linear functionals on Lipschitz functions $f$ in the following way
$$\langle \mu, f \rangle =\sum\limits_{j=1}^{+\infty} \frac{f(x_j)-f(y_j)}{|x_j-y_j|}\alpha_j $$
where the right hand side is finite because $\frac{|f(x_j)-f(y_j)|}{|x_j-y_j|}$ is bounded by the Lipschitz constant of $f$ and $\alpha_j$ is a sequence in $\ell^1$.\\
On the other hand, for some choices of $\alpha_j$, $\mu$ is not a finite signed Borel measure on $K$, even if the sequence of partial sums is a Cauchy sequence in the Kantorovich norm, showing that $\fM_0(K)$ is not complete.
A fourth section of this paper is dedicated to obtain a similar result for $\fM(K)^c$. In such a case, since we are not identifying functions that differ from each other by a constant, the atomic decomposition will be not only expressed as an infinite linear combination of dipoles, but a correction term in form of an atom (i.e. $\delta_{x_j}$) has to be added to each summand.\\
Let us state that such atomic decomposition represent a first step towards obtaining a Schauder basis for $\fM(K)^c$, which is still an open problem (see \cite[Problem $3$]{ambrosio2016linear}). Moreover, atomic decompositions are actually powerful tools to obtain some interesting functional properties. For instance, in \cite{bonsall1991general}, a general atomic decomposition theorem is used to provide a different proof of Banach's closed range theorem, while in \cite{carando2009duality}, such decompositions are used to determine different properties concerning duality and reflexivity of the decomposed spaces. Atomic decomposition are also used to solve some eigenvalue problems, as done in \cite{gasymov2018atomic}, where some second-order ordinary differential equations are solved by using atomic decompositions in $L^p$. Finally, let us also recall that generalization of atomic decompositions (and of frames of Banach spaces) have been provided, as in \cite{jahan2019approximative}.\\
Concerning the atomic decomposition of $\fM_0(K)^c$ and the series representation of the elements of $\fM(K)^c$, we think such series representation could be useful to work in the framework of differential equations on Banach spaces, as for instance the Kolmogorov equations that arise from Stochastic Partial Differential Equations (see, for instance, \cite{hairer2009introduction} for a general introduction on Stochastic PDEs).
\section{Lipschitz spaces, spaces of Borel measures and their completions}
In this section we will introduce the notation concerning the spaces and the norms we will work with. Let us fix a bounded open set $\Omega\subset \R^n$ and let us denote $K=\overline{\Omega}$.
\subsection{Lipschitz spaces and fractional Sobolev spaces}
\begin{defn}
We define the \textbf{Lipschitz spaces}
\begin{equation*}
\Lip(K)=\left\{f:K \to \R: \ \sup_{\substack{(x,y) \in K^2 \\ x \not = y}}\frac{|f(x)-f(y)|}{|x-y|}<+\infty\right\}
\end{equation*}
and
\begin{equation*}
\Lip_0(K)=\Lip(K)/\R,
\end{equation*}
i.e. the Lipschitz space $\Lip(K)$ modulo constant functions.\\
In $\Lip_0(K)$, to simplify the notation, we will identify any function $f:K \to \R$ with its equivalence class. If we endow $\Lip_0(K)$ with the norm 
\begin{equation*}
\Norm{f}{\Lip_0(K)}=\sup_{\substack{(x,y) \in K^2 \\ x \not = y}}\frac{|f(x)-f(y)|}{|x-y|},
\end{equation*}
then this normed space is a Banach space, while on $\Lip(K)$ the functional $\|\cdot\|_{\Lip_0(K)}$ would only work as a seminorm.\\
Furthermore, $\Lip(K)$ would be a Banach space if endowed with the norm
\begin{equation*}
\Norm{f}{\Lip(K)}=\max\{\Norm{f}{\Lip_0(K)},\Norm{f}{L^\infty(K)}\}.
\end{equation*}
\end{defn}
In the following we will need to embed the spaces $\Lip(K)$ and $\Lip_0(K)$ in suitable reflexive Banach spaces. For our purposes, the natural candidates are fractional Sobolev spaces. An almost complete survey on such spaces is given in \cite{di2012hitchhikers}.
\begin{defn}
	Let us denote by $W^{s,p}(\Omega)$ for $s \in (0,1)$ and $p>1$ the fractional Sobolev space consisting of the functions $f \in L^p(\Omega)$ such that
	\begin{equation*}
	\Norm{f}{\dot{W}^{s,p}(\Omega)}^p:=\int_{\Omega}\int_{\Omega}\frac{|f(x)-f(y)|^p}{|x-y|^{ps+n}}dxdy<+\infty.
	\end{equation*}
	If we endow $W^{s,p}(\Omega)$ with the norm
	\begin{equation*}
	\Norm{f}{W^{s,p}(\Omega)}=\Norm{f}{\dot{W}^{s,p}(\Omega)}+\Norm{f}{L^p(\Omega)}
	\end{equation*}
	it is a reflexive separable Banach space (since it is uniformly convex by means of a Clarkson-type inequality \cite{Grigoryan}). The homogeneous fractional Sobolev space $\dot{W}^{s,p}(\Omega)$ is defined as $\dot{W}^{s,p}(\Omega)=W^{s,p}(\Omega)/\R$ and if we endow this space with the norm $\Norm{f}{\dot{W}^{s,p}(\Omega)}$ it is a reflexive separable Banach space (for the same reason as before). 
\end{defn}
\begin{rmk}
	Let us recall that if $ps>n$, by a fractional Morrey-type embedding theorem, we have that $W^{s,p}(\Omega) \hookrightarrow C(K)$ (this is true for any doubling compact metric-measure space as a consequence of the Morrey embedding for Haj\l{}asz-Sobolev spaces \cite[Theorem $8.7$]{hajlasz2003sobolev} and the continuous embedding of Besov spaces in them \cite[Lemma $6.1$]{gogatishvili2010interpolation}). In this case we will always consider the continuous realization of a function in $W^{s,p}(\Omega)$.
\end{rmk}
Another characterization of $\dot{W}^{s,p}(\Omega)$ for $sp>n$ is given as the space of functions $f \in W^{s,p}(\Omega)$ such that $f(z)=0$, for an a priori fixed point $z \in K$ (here we are implicitly using the embedding $W^{s,p}(\Omega) \hookrightarrow C(K)$). In particular we have (by using the same idea adopted for $\Lip(K)$) that the norm
\begin{equation*}
\Norm{f}{W^{s,p}(\Omega),z}=\Norm{f}{\dot{W}^{s,p}(\Omega)}+|f(z)|
\end{equation*}
is equivalent to $\Norm{\cdot}{W^{s,p}(\Omega)}$. By identifying $C(K)/\R$ in the same way we have $\dot{W}^{s,p}(\Omega) \hookrightarrow C(K)/\R$.
\subsection{Spaces of Borel measures}
The definitions and considerations of this section can be also applied for any general metric space. However, here we focus on the Euclidean case, as it is the main scope of the paper.
\begin{defn}
	We denote the space of finite signed Borel measures on $K$ by $\fM(K)$, the subspace of finite positive measures on $K$ by $\fM_+(K)$, and the subspace of $\fM(K)$ consisting only of measures $\mu$ such that $\mu(K)=0$ by $\fM_0(K)$.\\
	Via the Hahn-Jordan decomposition, a signed measure $\mu$ can be seen as the difference of two positive Borel measures $\mu^+$ and $\mu^-$, i.e. $\mu=\mu^+-\mu^-$; the total variation of $\mu$ is defined as the sum of the two, i.e. $|\mu|=\mu^++\mu^-$.
\end{defn}
The total variation $\mu \in \fM(K)\mapsto |\mu|(K) \in \R$ is a norm on $\fM(K)$ that gives to the space the structure of Banach space. However, it does not take into account the metric structure of the domain $K$ (for instance $|\delta_x-\delta_y|(K)=2$, for any $(x,y)\in K^2$ with $x \not = y$). On the other hand, even in the more general setting of a compact metric space $K$, Kantorovich and Rubinstein (see \cite{kantorovich1958space} and \cite{vershik2013long} for a complete historical review) introduced a norm $\|\cdot\|_{KR}$ on $\fM(K)$ inducing a distance that is a natural extension of the distance on $K$.\\
As a matter of fact, $K$ naturally embeds in $\fM(K)$ by associating to each point $x$ in $K$ the Dirac measure $\delta_x$ concentrated in $x$. We will introduce a norm $\|\cdot\|_{KR}$ that will have the interesting property that $\|\delta_x-\delta_y\|_{KR}=\min\{|x-y|,2\}$, in some sense extending the metric on $K$ to $\fM(K)$.\\
To define the Kantorovich-Rubinstein norm on $\fM(K)$, we first start by doing so on the space $\fM_0(K)\subset \fM(K)$ of \textbf{balanced measures} $\mu$, i.e. such that $\mu(K)=0$ and hence $\mu^+(K)=\mu^-(K)$.
\begin{defn}[\cite{kantorovich1942mass,kantorovich1957functional,kantorovich1958space}]
	Consider any $\mu \in \fM_0(K)$ and define a family $\Psi_\mu \subset \fM_+(K \times K)$ of positive Borel measures on the Cartesian square $K\times K$ of $K$ in the following way: $\Psi \in \Psi_\mu$ if and only if, for any Borel set $E \subset K$, $\Psi(K,E)-\Psi(E,K)=\mu(E)$ (called \textbf{balance condition})\\
	The Kantorovich-Rubinstein norm of $\mu$ is defined as
	\begin{equation*}
	\Norm{\mu}{KR_0}:=\inf\left\{\int_{K \times K}|x-y|d\Psi(x,y): \ \Psi \in \Psi_\mu\right\}.
	\end{equation*}
	\end{defn}
\begin{defn}
	For $\mu \in \fM(K)$ we define the ``extended'' Kantorovich-Rubinstein norm (as done in \cite{Hanin}) of $\mu$ as
	\begin{equation*}
	\Norm{\mu}{KR}:=\inf\{\Norm{\nu}{KR_0}+|\mu-\nu|(K): \ \nu \in \fM_0(K)\}.
	\end{equation*}
\end{defn}
An important thing to notice is that $(\fM_0(K),\Norm{\cdot}{KR_0})$ and $(\fM(K),\Norm{\cdot}{KR})$ are not Banach spaces.
\begin{rmk}
	Given $(x,y)\in K$ we have $\Norm{\delta_x-\delta_y}{KR_0}=|x-y|$ while $\Norm{\delta_x}{KR}=1$, showing that the Kantorovich-Rubinstein norm satisfies the desired property of concordance with the metric on $K$.
\end{rmk}
	The completion of the space of finite Borel measure on $K$ with respect to the Kantorovich-Rubinstein norm is denoted by $\fM(K)^c$, while we denote by $\fM_0(K)^c$ the completion of $\fM_0(K)$ with respect to the norm $\Norm{\cdot}{KR_0}$.
 \\
 It has been shown (see for instance \cite{Hanin}) that $\fM(K)^*$ is isometric to $\Lip(K)$ while $\fM_0(K)^*$ is isometric to $\Lip_0(K)$. Moreover, it is interesting to recall a characterization of $\fM_0(K)^c$. Indeed, in \cite{BouchitteCompletion} it is shown that if $K$ is a compact subset of $\R^n$ then for any functional $\mu \in \fM_0(K)^c$ there exists a function $f \in L^1(K;\R^n)$ such that
 \begin{equation*}
 \mu=\divg f.
 \end{equation*}
Moreover (see \cite{BouchitteCompletion}), for any functional $\mu \in \fM_0(K)^c$ there exists a function $g \in B_{\Lip_0(K)}$ (where for any Banach space $X$ we denote by $B_X$ the closed unit ball in $X$) such that
\begin{equation*}
\Norm{\mu}{KR_0}=\langle g, \mu \rangle,
\end{equation*}
so that the norm is attained. Let us remark that last formula holds for any separable metric space.
\section{Atomic decomposition of $\fM_0(K)^c$}
Our aim is to give an atomic decomposition of elements $\mu$ of $\fM_0(K)^c$, and so in particular of measures that are balanced on $K$, i.e. such that $\mu(K)=0$, as an infinite sum of simpler elements that we will call \textit{atoms}.
\begin{defn}
We will call $\delta$-\textbf{atom} any measure $\mu \in \fM(K)$ whose support is finite. Moreover, we call \textbf{dipoles} the measure $\mu \in \fM_0(K)$ of the form $\mu=\alpha(\delta_x-\delta_y)$ for some $\alpha \in \R$ and $(x,y)\in K^2$.	
\end{defn}
To obtain a decomposition of elements of $\fM_0(K)^c$ - which will induce a decomposition of elements of $\fM_0(K)$ - we generalize the approach of \cite{angrisani2019orlicz}, which relies on the $o$--$O$ structure of $(c^{0,\alpha},C^{0,\alpha})$, by using results contained in \cite{d2020atomic}, which allow us to remove the dependence on the "little o" space, because for $\Lip$ and $\Lip_0$ it is trivial. We start by writing $\Lip_0$ in a suitable way. Indeed we want to make use of \cite[Theorem $3$]{d2020atomic} and to do this we have to characterize $\Lip_0$ by means of linear bounded operators $L:X \to Y$ where $X$ is a reflexive Banach space containing $\Lip_0$ and $Y$ is some other Banach space. In particular, we want to find a countable family $\cF=\{L_j\}_{j \in \N}$ of such kind of operators such that $$\Lip_0(K)=\{f \in X: \ \sup_{j \in \N}\Norm{L_jf}{Y}<+\infty\}$$. As we will se from the following Lemma, the natural choice we have for $Y$ is $\R$ and for $X$ is $\dot{W}^{s,p}(K)$. Indeed, as we stated before, $\dot{W}^{s,p}(K)$ is separable and reflexive and contains $\Lip_0(K)$ by definition. Moreover, we can chose $s$ and $p$ in a suitable way to obtain $\dot{W}^{s,p}(K)$ continuously embedded in the quotient space $C(K)/\R$. This choice will be useful to show the boundedness of $L_j$. Here the compactness of $K$ plays a prominent role, since in such case $C(K)\subset L^\infty(K)$ (that will be important to show boundedness of $L_j$). In case we choose $K$ to be not compact (for instance unbounded), then we need to find a different approach to show boundedness of the operators. By now, let us focus on the compact case.
\begin{lem}\label{lem0}
	There exists a sequence of functionals $(L_j)_{j \in \N}: X=(\dot{W}^{s,p}(\Omega))\to Y=\R$ such that
	\begin{equation*}
	\Lip_0(K)=\{f \in \dot{W}^{s,p}(\Omega): \ \sup_{j \in \N}|L_jf|<+\infty\}
	\end{equation*}
	and
	\begin{equation*}
	\Norm{f}{\Lip_0(K)}=\sup_{j \in \N}|L_jf|.
	\end{equation*}
\end{lem}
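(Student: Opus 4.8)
The plan is to realize the Lipschitz seminorm as a supremum of difference quotients taken over a countable dense family of pairs of points. First I would fix $s \in (0,1)$ and $p>1$ with $sp>n$, so that by the fractional Morrey embedding recalled above we have $\dot{W}^{s,p}(\Omega) \hookrightarrow C(K)/\R$; this lets me evaluate differences of representatives pointwise. Since $K$ is compact it is separable, so I would pick a countable dense subset $\{z_k\}_{k \in \N} \subset K$ and enumerate the countable set of ordered pairs $(z_k,z_l)$ with $k \ne l$ as a single sequence, defining for each such pair the functional
\[
L_j f = \frac{f(z_{k_j}) - f(z_{l_j})}{|z_{k_j} - z_{l_j}|}.
\]
Because the map $f \mapsto f(x)-f(y)$ is insensitive to additive constants, each $L_j$ is well defined on the quotient $\dot{W}^{s,p}(\Omega)$, and it is clearly linear.

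Next I would check boundedness. On $C(K)/\R$ one has $|f(x)-f(y)| \le 2\Norm{f}{C(K)/\R}$ for every pair $(x,y)$, so composing with the embedding $\dot{W}^{s,p}(\Omega) \hookrightarrow C(K)/\R$ gives
\[
|L_j f| \le \frac{2}{|z_{k_j} - z_{l_j}|}\,\Norm{f}{C(K)/\R} \le \frac{C}{|z_{k_j}-z_{l_j}|}\,\Norm{f}{\dot{W}^{s,p}(\Omega)},
\]
so each $L_j \in (\dot{W}^{s,p}(\Omega))^*$. Here the constant depends on $j$, which is harmless: I only need each $L_j$ individually bounded, not uniformly.

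The heart of the matter is the identity
\[
\sup_{\substack{(x,y) \in K^2 \\ x \ne y}} \frac{|f(x)-f(y)|}{|x-y|} = \sup_{j \in \N} |L_j f|
\]
for every $f$ in the continuous realization. The inequality $\ge$ is immediate, since the right-hand side ranges over a subfamily of pairs. For $\le$, I would use continuity of $f$: given any $(x,y)$ with $x \ne y$, choose $z_k \to x$ and $z_l \to y$ from the dense set with $z_k \ne z_l$ (possible since $x \ne y$); then $f(z_k) \to f(x)$, $f(z_l) \to f(y)$ and $|z_k - z_l| \to |x-y| > 0$, so the quotient at $(x,y)$ is a limit of quotients appearing on the right and is therefore bounded by the supremum over the dense family. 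Taking the supremum over all admissible $(x,y)$ yields the identity. Applying it to $f \in \Lip_0(K)$ gives the asserted norm equality, while applying it to an arbitrary $f \in \dot{W}^{s,p}(\Omega)$ with $\sup_j |L_j f| < +\infty$ forces $\Norm{f}{\Lip_0(K)} < +\infty$, i.e. $f \in \Lip_0(K)$, establishing the set equality. The only delicate point is this density argument, and it hinges precisely on the continuity afforded by the Morrey embedding together with the fact that the denominator $|x-y|$ stays bounded away from zero along the approximating pairs.
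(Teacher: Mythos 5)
Your proof is correct and follows essentially the same route as the paper: fix $sp>n$ to get the embedding $\dot{W}^{s,p}(\Omega)\hookrightarrow C(K)/\R$, define the $L_j$ as difference quotients over a countable dense family of pairs, verify boundedness via the sup-norm bound (with a constant depending on $j$), and conclude the norm identity by density and continuity. The only cosmetic difference is that the paper builds its pairs from two \emph{disjoint} countable dense sets $D_1\times D_2$ (a choice it reuses later when presenting the atoms as dipoles on disjoint dense sets), whereas you take distinct points from a single dense set; both choices guarantee $x_j\neq y_j$ and density of the pairs, so the argument is unaffected.
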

\begin{proof}
	First of all, let us fix $s \in (0,1)$ and $p>1$ such that $ps>n$, so that $\dot{W}^{s,p}(\Omega)\hookrightarrow C(K)/\R$. Let us consider $D_1 \subset K$ a countable set such that $K=\overline{D}_1$ and $K_1=K \setminus D_1$. Now let us consider $D_2 \subset K_1$ a countable set such that $K_1=\overline{D_2}$. Finally, let us define $D=D_1 \times D_2$. Observe that $D_1 \cap D_2=\emptyset$ so, for any $(x,y)\in D$, $x \not = y$. Moreover, $D$ is countable, hence we can enumerate $D=\{(x_j,y_j)\}_{j \in \N}$. Finally $\overline{D}=K \times K$. Let us define
	\begin{equation*}
	L_j: f \in \dot{W}^{s,p}(\Omega) \to \frac{f(x_j)-f(y_j)}{|x_j-y_j|} \in \R.
	\end{equation*}
	$L_j$ is obviously linear. Moreover, since $\dot{W}^{s,p}(\Omega) \hookrightarrow C(K)/\R$ we have
	\begin{equation*}
	\frac{f(x_j)-f(y_j)}{|x_j-y_j|}\le\frac{2}{|x_j-y_j|}\Norm{f}{L^\infty(K)}\le C_j \Norm{f}{\dot{W}^{s,p}(\Omega)},
	\end{equation*}
	hence $L_j \in (\dot{W}^{s,p}(\Omega))^*$ for any $j \in \N$.\\
	Finally, let us observe that by density of $D$ in $K \times K$ and continuity of $f \in \dot{W}^{s,p}(\Omega)$ it holds
	\begin{equation*}
	\Norm{f}{\Lip_0(K)}=\sup_{j \in \N}|L_jf|
	\end{equation*}
	concluding the proof.
\end{proof}
Now that we have this rewriting of the definition of $\Lip_0(K)$ we can use the techniques employed in \cite{d2020atomic} to obtain the desired atomic decomposition. Before giving the main result, let us make use of the ideas behind \cite{d2020atomic}. Indeed, in such case, one can define the operator $V: \Lip_0 \to \ell^\infty$ as, for any $f \in \Lip_0$, $Vf(j)=L_jf$ for any $j \in \N$. Thus, after obtaining that $V\Lip_0 \simeq \Lip_0$ (here we are using $Y=\R$ and $\R^{**} \simeq \R$) it is not difficult to check that the predual $(\Lip_0)_*$ is equivalent to $\ell^1/P$ where $P=(V\Lip_0)^\perp \cap \ell^1$ (where with $\perp$ we denote the annihilator). This gives us a series representation of the elements of the predual of $\Lip_0(K)$, which is actually $\fM_0(K)^c$. This is the starting point of the following result.
\begin{thm}\label{dec0}
	There exists a constant $C \in (0,1)$ such that for any functional $\mu \in \fM_0(K)^c$ there exists a sequence $(\alpha_j)_{j \in \N}\in \ell^1(\R)$ such that
	\begin{equation*}
	\mu=\sum_{j=1}^{+\infty}\frac{\delta_{x_j}-\delta_{y_j}}{|x_j-y_j|}\alpha_j,
	\end{equation*}
	where the series converges in $KR_0$, and
	\begin{equation}\label{bound}
	C\sum_{j=1}^{+\infty}|\alpha_j| \le \Norm{\mu}{KR_0} \le \sum_{j=1}^{+\infty}|\alpha_j|,
	\end{equation}
	where the sequences $(x_j)_{j \in \N}$ and $(y_j)_{j \in \N}$ are defined in Lemma \ref{lem0}. Moreover, the sequence of $\delta$-atoms $(\mu_j)_{j \in \N} \subset \fM_0(K)$ defined as
	\begin{equation*}
	\mu_j=\frac{\delta_{x_j}-\delta_{y_j}}{|x_j-y_j|}
	\end{equation*}
	spans $\fM_0(K)^c$, with $\Norm{\mu_j}{KR_0}=1$ for any $j \in \N$. In particular the $\delta$-atoms $\mu_j$ are dipoles, hence admit support of cardinality exactly $2$.
\end{thm}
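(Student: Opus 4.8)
The plan is to read the decomposition off the abstract machinery of \cite[Theorem $3$]{d2020atomic}, feeding it the data of Lemma \ref{lem0} and then translating the resulting abstract predual into the concrete space $\fM_0(K)^c$. Set $X=\dot{W}^{s,p}(\Omega)$ (reflexive and separable) and let $\{L_j\}_{j\in\N}\subset X^*$ be the functionals of Lemma \ref{lem0}, so that $\Lip_0(K)=\{f\in X:\ \sup_j|L_jf|<+\infty\}$ with $\Norm{f}{\Lip_0(K)}=\sup_j|L_jf|$. Define $V:\Lip_0(K)\to\ell^\infty$ by $Vf=(L_jf)_{j\in\N}$; by Lemma \ref{lem0} this is an isometry onto its image $V\Lip_0(K)$. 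The conceptual heart of the argument is that $V\Lip_0(K)$ is weak-$*$ closed in $\ell^\infty=(\ell^1)^*$. Since $\ell^1$ is separable, by the Krein--\v Smulian theorem it suffices to check weak-$*$ closedness on bounded sets, where the weak-$*$ topology is metrizable; so let $Vf_n\to a$ weak-$*$ with $(f_n)$ bounded in $\Lip_0(K)$. Through the continuous inclusion $\Lip_0(K)\hookrightarrow X$ the sequence is bounded in the reflexive space $X$, hence along a subsequence $f_n\rightharpoonup f$ weakly in $X$; continuity of each $L_j\in X^*$ gives $L_jf=\lim_n L_jf_n=a_j$, and $\sup_j|L_jf|=\Norm{a}{\ell^\infty}<+\infty$ forces $f\in\Lip_0(K)$ and $Vf=a$. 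Thus, with $P:=(V\Lip_0(K))^\perp\cap\ell^1$ as in the excerpt, the bipolar theorem yields $(\ell^1/P)^*=V\Lip_0(K)\cong\Lip_0(K)$ isometrically.

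Next I would identify $\ell^1/P$ with $\fM_0(K)^c$. Since $\fM_0(K)^*$ is isometric to $\Lip_0(K)$, the completion $\fM_0(K)^c$ is a predual of $\Lip_0(K)$, and I would introduce the canonical map $T:\ell^1(\R)\to\fM_0(K)^c$ determined by $Te_j=\mu_j$, where $\mu_j=\frac{\delta_{x_j}-\delta_{y_j}}{|x_j-y_j|}$. Because $\Norm{\mu_j}{KR_0}=1$ (by the Remark, $\Norm{\delta_{x_j}-\delta_{y_j}}{KR_0}=|x_j-y_j|$), the map $T$ is a contraction on finitely supported sequences and extends to all of $\ell^1(\R)$ with $\Norm{T\alpha}{KR_0}\le\sum_j|\alpha_j|$; this already gives the right-hand inequality of \eqref{bound} together with $KR_0$-convergence of the series, since the partial sums then form a Cauchy sequence. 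A direct computation of the pairing gives $\langle T\alpha,f\rangle=\sum_j\alpha_j\frac{f(x_j)-f(y_j)}{|x_j-y_j|}=\sum_j\alpha_j L_jf=\langle Vf,\alpha\rangle$, so under the identifications above $T^*=V$. Hence $\ker T=(V\Lip_0(K))^\perp\cap\ell^1=P$, and $T$ factors through an injective $\overline{T}:\ell^1/P\to\fM_0(K)^c$ whose adjoint is the isometric isomorphism $V$. By the standard duality between metric surjections and isometric embeddings, $\overline{T}^*$ being an isometric isomorphism forces $\overline{T}$ to be one as well.

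With this isomorphism in hand I would conclude as follows. Surjectivity of $\overline{T}$ says exactly that every $\mu\in\fM_0(K)^c$ has the form $\mu=\sum_j\alpha_j\mu_j$ for some $\alpha\in\ell^1(\R)$, i.e. the $\delta$-atoms $\mu_j$ span $\fM_0(K)^c$; each $\mu_j$ is a dipole supported on $\{x_j,y_j\}$ with $x_j\ne y_j$ (as $D_1\cap D_2=\emptyset$ in Lemma \ref{lem0}), so its support has cardinality exactly $2$, and $\Norm{\mu_j}{KR_0}=1$. For the quantitative bound \eqref{bound}, the quotient-norm description $\Norm{\mu}{KR_0}=\inf\{\sum_j|\alpha_j|:\ \sum_j\alpha_j\mu_j=\mu\}$ coming from $\overline{T}$ gives the upper inequality for any representative and, upon choosing a near-minimal representative, a lower inequality with any prescribed $C\in(0,1)$; alternatively one simply transports the constant furnished by \cite[Theorem $3$]{d2020atomic}.

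The step I expect to be the main obstacle is not the bookkeeping but the weak-$*$ closedness of $V\Lip_0(K)$, since this is precisely what guarantees that the abstract predual $\ell^1/P$ is genuinely a predual of $\Lip_0(K)$ (and not of a strictly larger weak-$*$ closure) and therefore matches $\fM_0(K)^c$. This is where reflexivity of $\dot{W}^{s,p}(\Omega)$, the continuous embedding $\Lip_0(K)\hookrightarrow\dot{W}^{s,p}(\Omega)$, and the compactness of $K$ (through the Morrey embedding $\dot{W}^{s,p}(\Omega)\hookrightarrow C(K)/\R$ used in Lemma \ref{lem0} to make the $L_j$ bounded) all enter. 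Everything else---the contraction estimate for $T$, the pairing computation $T^*=V$, and the passage to the quotient---is routine.
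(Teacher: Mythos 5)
Your proposal is correct, and it follows the same conceptual route as the paper --- but where the paper outsources the functional-analytic core to a citation, you rebuild it from scratch. The paper's proof is essentially two steps: it applies \cite[Theorem 3]{d2020atomic} to the operators of Lemma \ref{lem0}, obtaining the constant $C\in(0,1)$, the representation $\mu=\sum_{j}L_j^*\alpha_j$, and the two-sided estimate with $\sum_j\Norm{L_j^*\alpha_j}{KR_0}$ in place of $\sum_j|\alpha_j|$; the only computation carried out is the pairing identity $\langle f,L_j^*\alpha_j\rangle=\frac{f(x_j)-f(y_j)}{|x_j-y_j|}\alpha_j$, which identifies $L_j^*\alpha_j$ with the dipole $\alpha_j\mu_j$ and hence $\Norm{L_j^*\alpha_j}{KR_0}$ with $|\alpha_j|$. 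Everything else you prove by hand --- the weak-$*$ closedness of $V\Lip_0(K)$ via Krein--\v{S}mulian plus reflexivity of $\dot{W}^{s,p}(\Omega)$, the bipolar identification $(\ell^1/P)^*\cong V\Lip_0(K)$, and the transfer to $\fM_0(K)^c$ through the map $T$ with $T^*=V$ and $\ker T=P$ --- is precisely the machinery hidden inside that citation, which the paper only sketches informally in the paragraph preceding the theorem. Your version buys three things: it is self-contained; it makes the isometry $\fM_0(K)^c\cong\ell^1/P$ explicit, thereby rigorously settling that the abstract predual produced by the machinery really is $\fM_0(K)^c$ (your concrete map $T$ does this without any appeal to uniqueness of preduals, a point the paper treats rather casually); and, via the quotient-norm description $\Norm{\mu}{KR_0}=\inf\left\{\sum_j|\alpha_j|:\ \sum_j\alpha_j\mu_j=\mu\right\}$, it yields \eqref{bound} for every prescribed $C\in(0,1)$ by taking near-minimal representatives --- strictly more than the single unspecified constant inherited from the cited theorem. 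The paper's version buys brevity. The one step you assert without justification is the continuity of the inclusion $\Lip_0(K)\hookrightarrow\dot{W}^{s,p}(\Omega)$, needed to extract your weakly convergent subsequence; it is true but deserves a line: since $\Omega$ is bounded and $p(1-s)>0$, one has $\iint_{\Omega\times\Omega}|x-y|^{p(1-s)-n}\,dx\,dy<+\infty$, whence $\Norm{f}{\dot{W}^{s,p}(\Omega)}\le c\,\Norm{f}{\Lip_0(K)}$.
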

\begin{proof}
	By \cite[Theorem $3$]{d2020atomic} we know that there exists $C \in (0,1)$ such that for any $\mu \in \fM_0(K)^c$ there exists a sequence $(\alpha_j)_{j \in \N}$ such that
	\begin{equation*}
	\mu=\sum_{j=1}^{+\infty}L_j^*\alpha_j,
	\end{equation*}
	where $L_j^*$ is the adjoint operator of $L_j$, and
	\begin{equation*}
	C\sum_{j=1}^{+\infty}\Norm{L_j^*\alpha_j}{KR_0}\le \Norm{\mu}{KR_0}\le \sum_{j=1}^{+\infty}\Norm{L_j^*\alpha_j}{KR_0}.
	\end{equation*}
	Since one has
	\begin{equation*}
	\langle f, L_j^*\alpha_j\rangle=\langle L_jf, \alpha_j\rangle=\frac{f(x_j)-f(y_j)}{|x_j-y_j|}\alpha_j,
	\end{equation*}
	then
	\begin{equation*}
	L_j^*\alpha_j=\frac{\delta_{x_j}-\delta_{y_j}}{|x_j-y_j|}\alpha_j
	\end{equation*}
	concluding the proof.
\end{proof}
\begin{rmk}\label{rmkoth}
	Let us remark that one could use any separable Banach space $X$ such that $\Lip(K) \subset X \subset L^\infty(K)$, where the second inclusion is continuous, in place of $W^{s,p}(\Omega)$.\\
	Moreover, let us observe that the previous Theorem provides a $\ell^1/P$-atomic decomposition of $\fM_0(K)^c$.
\end{rmk}
The problem of characterizing the space $\fM_0(K)^c$ has been approached in several ways. In particular it is interesting to remember that in \cite{BouchitteCompletion}, such a space is shown to be isometric to the space $L^1(K;\R^d)/V_0$ where $V_0=\{\sigma \in L^1(K;\R^d): \ \divg\sigma=0\}$, given by $\sigma \in L^1(K;\R^d)/V_0 \mapsto -\divg \sigma \in \fM_0(K)^c$. The motivation of such research towards a characterization of $\fM_0(K)^c$ is linked (as the authors state in the introduction of their paper) to the convergence of infinite sums of dipoles to functionals that are not represented by balanced measures. Here we have shown that such infinite sums of dipoles are indeed all the elements of $\fM_0(K)^c$ and the dipoles represent an atomic part of such a space. Let us finally recall that the infinite sums of dipoles are shown to have a characterization as $-\divg \sigma$ for some $\sigma \in L^1(K;\R^d)$ by using the theory of tangential measures (see \cite[Example $3.7$]{BouchitteCompletion}). In particular this decomposition could be used to determine properties of distributional solutions of partial differential equations involving divergences of $L^1$ functions.
\section{Atomic decomposition of $\fM(K)^c$}
This section is devoted to a similar atomic decomposition in the larger space $\fM(K)^c$, with the help of the space $\Lip(K)$. This time we cannot use the same operators as in Lemma \ref{lem0} since they define a seminorm on $\Lip(K)$. The following rewriting of $\Lip(K)$ relies on the fact that we can consider on $\R^2$ the $\ell^\infty$ norm. 
\begin{lem}\label{lem1}
	There exists a sequence of operators $(L_j)_{j \in \N} \in \cL(W^{s,p}(\Omega),\R^2)$, where we equip $\R^2$ with the norm $\Norm{(x,y)}{\ell^\infty}=\max\{|x|,|y|\}$, such that
	\begin{equation*}
	\Lip(K)=\{f \in W^{s,p}(\Omega): \ \sup_{j \in \N}\Norm{L_jf}{\ell^\infty}<+\infty\}
	\end{equation*}
	and
	\begin{equation*}
	\Norm{f}{\Lip(K)}=\sup_{j \in \N}\Norm{L_jf}{\ell^\infty}.
	\end{equation*}
\end{lem}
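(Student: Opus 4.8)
The plan is to mimic the construction of Lemma \ref{lem0}, but to augment each dipole functional with a point-evaluation so that the resulting $\R^2$-valued operators recover the full norm $\Norm{\cdot}{\Lip(K)} = \max\{\Norm{\cdot}{\Lip_0(K)}, \Norm{\cdot}{L^\infty(K)}\}$ rather than only the Lipschitz seminorm. First I would fix $s \in (0,1)$ and $p>1$ with $ps>n$, so that the Morrey-type embedding $W^{s,p}(\Omega) \hookrightarrow C(K)$ holds and point evaluations make sense; note that here we use the nonhomogeneous space $W^{s,p}(\Omega)$ and its embedding into $C(K) \subset L^\infty(K)$, not the quotient. I would reuse the countable dense sets $D_1, D_2$ and the enumeration $D = \{(x_j,y_j)\}_{j \in \N}$ of $D_1 \times D_2$ from Lemma \ref{lem0}. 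Since $K = \overline{D_1}$ is separable, I would also fix an enumeration $\{z_j\}_{j \in \N}$ of a countable dense subset of $K$ (for instance $D_1$ itself). Then I would define
\begin{equation*}
L_j f = \left(\frac{f(x_j)-f(y_j)}{|x_j-y_j|},\ f(z_j)\right) \in \R^2,
\end{equation*}
equipping $\R^2$ with the $\ell^\infty$ norm as specified.

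The verification then proceeds in three steps. First, linearity of each $L_j$ is immediate. Second, for boundedness I would estimate each component separately using the embedding $W^{s,p}(\Omega) \hookrightarrow C(K)$: the first component is controlled exactly as in Lemma \ref{lem0} by $\frac{2}{|x_j-y_j|}\Norm{f}{L^\infty(K)} \le C_j' \Norm{f}{W^{s,p}(\Omega)}$, while the second satisfies $|f(z_j)| \le \Norm{f}{L^\infty(K)} \le C'' \Norm{f}{W^{s,p}(\Omega)}$, so that $\Norm{L_j f}{\ell^\infty} \le C_j \Norm{f}{W^{s,p}(\Omega)}$ and hence $L_j \in \cL(W^{s,p}(\Omega), \R^2)$. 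Third, I would establish the norm identity. The key observation is that, by the $\ell^\infty$ structure on $\R^2$,
\begin{equation*}
\sup_{j \in \N} \Norm{L_j f}{\ell^\infty} = \max\left\{\sup_{j \in \N}\frac{|f(x_j)-f(y_j)|}{|x_j-y_j|},\ \sup_{j \in \N}|f(z_j)|\right\}.
\end{equation*}
By density of $D$ in $K \times K$ and continuity of $f$, the first supremum equals $\Norm{f}{\Lip_0(K)}$ (this is precisely the content of Lemma \ref{lem0}); by density of $\{z_j\}$ in $K$ and continuity of $f$, the second supremum equals $\Norm{f}{L^\infty(K)}$. Taking the maximum gives exactly $\Norm{f}{\Lip(K)}$, and the membership characterization of $\Lip(K)$ follows since the supremum is finite precisely when both pieces are finite.

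The main obstacle, and the reason the $\ell^\infty$ norm on $\R^2$ is essential, is that one must recover the Lipschitz seminorm and the sup-norm simultaneously through a single countable family of $\R^2$-valued functionals; had we chosen, say, the Euclidean norm on $\R^2$, the supremum would not decouple into a clean maximum and would distort the target norm. The choice of $\ell^\infty$ makes the decoupling in the third step exact. A minor technical point to handle carefully is that the two density arguments are of slightly different types—one over pairs in $K \times K$ for the difference quotient and one over single points in $K$ for the values—so I would verify each continuity-plus-density passage to the supremum independently rather than conflating them.
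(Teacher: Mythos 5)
Your proposal is correct and follows essentially the same route as the paper: fix $ps>n$ to get $W^{s,p}(\Omega)\hookrightarrow C(K)$, reuse the dense set $D=\{(x_j,y_j)\}$ from Lemma \ref{lem0}, append a point evaluation as second component, and exploit that the $\ell^\infty$ norm makes the supremum split into $\max\{\Norm{f}{\Lip_0(K)},\Norm{f}{L^\infty(K)}\}$. The only cosmetic difference is that the paper evaluates at $x_j$ itself (using that $\{x_j\}_{j\in\N}$ enumerates $D_1$, dense in $K$) rather than at a separate dense sequence $\{z_j\}$; this choice is what later makes the atoms in the decomposition of $\fM(K)^c$ take the form $\delta_{x_j}$, but it does not change the argument.
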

\begin{proof}
	First of all, let us fix $s \in (0,1)$ and $p>1$ such that $ps>n$, so that $W^{s,p}(\Omega)\hookrightarrow C(K)$, and let us consider the set $D \subset K^2$ defined in Lemma \ref{lem0}. Let us define
	\begin{equation*}
	L_j: f \in W^{s,p}(\Omega) \to \left(\frac{f(x_j)-f(y_j)}{|x_j-y_j|},f(x_j)\right) \in \R^2.
	\end{equation*}
	$L_j$ is obviously linear. Moreover, since $W^{s,p}(\Omega) \hookrightarrow C(K)$ we have
	\begin{equation*}
	\max\left\{\frac{|f(x_j)-f(y_j)|}{|x_j-y_j|},|f(x_j)|\right\}\le\max\left\{\frac{2}{|x_j-y_j|},1\right\}\Norm{f}{L^\infty(K)}\le C_j \Norm{f}{W^{s,p}(\Omega)},
	\end{equation*}
	hence $L_j \in \cL(W^{s,p}(\Omega),\R^2)$ for any $j \in \N$.\\
	Finally, let us observe that by density of $D$ in $K \times K$, $D_1$ in $K$, and continuity of $f \in \dot{W}^{s,p}(\Omega)$ it holds
	\begin{equation*}
	\Norm{f}{\Lip(K)}=\sup_{j \in \N}\Norm{L_jf}{\ell^\infty}
	\end{equation*}
	concluding the proof.
\end{proof}
As we did in the previous section, we can now use the techniques of \cite{d2020atomic} to obtain the atomic decomposition of $\fM(K)^c$. Let us recall that the starting point of the following result is still the series decomposition that follows from \cite[Theorem $3$]{d2020atomic} that we discussed before Theorem $3.2$. Moreover, let us recall that Remark \ref{rmkoth} holds also for this Theorem.
\begin{thm}
	There exists a constant $C \in (0,1)$ such that for any functional $\mu \in \fM(K)^c$ there exists a sequence $((\alpha_j^1,\alpha_j^2))_{j \in \N}\in \ell^1(\R^2)$ such that
	\begin{equation*}
	\mu=\sum_{j=1}^{+\infty}\left(\frac{\delta_{x_j}-\delta_{y_j}}{|x_j-y_j|}\alpha_j^1+\delta_{x_j}\alpha_j^2\right),
	\end{equation*}
	where the series converges in $KR$, and
	\begin{equation}\label{targetbound}
	C\sum_{j=1}^{+\infty}(|\alpha_j^1|+|\alpha_j^2|)\le \Norm{\mu}{KR}\le\sum_{j=1}^{+\infty}(|\alpha_j^1|+|\alpha_j^2|),
	\end{equation}
	where the sequences $(x_j)_{j \in \N}$ and $(y_j)_{j \in \N}$ are defined in Lemma \ref{lem1}. In particular, the sequence of $\delta$-atoms $(\mu_j)_{j \in \N} \subset \fM(K)$ defined as
	\begin{equation}\label{deltaatoms}
	\mu_j=\begin{cases} \frac{\delta_{x_k}-\delta_{y_k}}{|x_k-y_k|} & j=2k-1\\
	\delta_{x_k} & j=2k
	\end{cases}
	\end{equation}
	 spans $\fM(K)^c$, and $\Norm{\mu_j}{KR}\le 1$ for any $j \in \N$. 
\end{thm}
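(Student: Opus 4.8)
The plan is to follow the proof of Theorem~\ref{dec0}, the only structural change being that the operators of Lemma~\ref{lem1} now take values in $Y=\R^2$ equipped with the $\ell^\infty$ norm rather than in $Y=\R$. First I would feed the family $(L_j)_{j\in\N}\in\cL(W^{s,p}(\Omega),\R^2)$ into \cite[Theorem $3$]{d2020atomic}. The decisive point is the identification of the dual norm: since $(\R^2,\Norm{\cdot}{\ell^\infty})^*=(\R^2,\Norm{\cdot}{\ell^1})$, the coefficients delivered by the abstract theorem are elements $\alpha_j=(\alpha_j^1,\alpha_j^2)$ of $Y^*=(\R^2,\ell^1)$ with $\Norm{\alpha_j}{Y^*}=|\alpha_j^1|+|\alpha_j^2|$. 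The theorem then produces a sequence $(\alpha_j)_{j\in\N}\in\ell^1(\R^2)$ and a representation $\mu=\sum_{j=1}^{+\infty}L_j^*\alpha_j$ subject to the two-sided estimate \eqref{targetbound}, the series converging in $KR$ because it is absolutely convergent in the Banach space $\fM(K)^c$.

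Next I would identify the summands by unwinding the adjoint through the duality pairing, exactly as in Theorem~\ref{dec0}: for every $f\in\Lip(K)$,
\begin{equation*}
\langle f,L_j^*\alpha_j\rangle=\langle L_jf,\alpha_j\rangle=\frac{f(x_j)-f(y_j)}{|x_j-y_j|}\alpha_j^1+f(x_j)\alpha_j^2,
\end{equation*}
so that
\begin{equation*}
L_j^*\alpha_j=\frac{\delta_{x_j}-\delta_{y_j}}{|x_j-y_j|}\alpha_j^1+\delta_{x_j}\alpha_j^2,
\end{equation*}
which is the announced form of $\mu$ once the pair $(\alpha_j^1,\alpha_j^2)$ is relabelled as the odd/even coefficients of the atoms in \eqref{deltaatoms}. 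To close the argument I would verify the normalisation of the atoms: from $\Norm{\delta_x}{KR}=1$ and $\Norm{\delta_x-\delta_y}{KR}=\min\{|x-y|,2\}$ one reads off $\Norm{\mu_{2k}}{KR}=1$ and $\Norm{\mu_{2k-1}}{KR}=\min\{|x_k-y_k|,2\}/|x_k-y_k|\le1$, whence $\Norm{\mu_j}{KR}\le1$ for all $j$; the spanning property is then immediate, since every $\mu\in\fM(K)^c$ is the $KR$-limit of the partial sums, which are finite combinations of the $\mu_j$.

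The step requiring genuine care — and where the argument departs from the balanced case — is the left-hand inequality of \eqref{targetbound}. In Theorem~\ref{dec0} one has $\Norm{L_j^*\alpha_j}{KR_0}=|\alpha_j|$ exactly, so the estimate may be read summand by summand; here, by contrast, $\Norm{L_j^*\alpha_j}{KR}$ need not equal $|\alpha_j^1|+|\alpha_j^2|$, and is in general strictly smaller when $\diam(K)>2$, because of the truncation at $2$ in $\Norm{\delta_x-\delta_y}{KR}$ and of the possible cancellation between the dipole part and the atom part. Hence bounding $\sum_j\Norm{L_j^*\alpha_j}{KR}$ from above would only reproduce the right-hand inequality, and the lower bound must be taken from the ambient quotient structure $\ell^1(Y^*)/P$ of the predual rather than from the individual summands. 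If instead one prefers to argue directly, the missing ingredient is a uniform estimate $\Norm{L_j^*\alpha_j}{KR}\ge c_0(|\alpha_j^1|+|\alpha_j^2|)$ with $c_0=\tfrac12\min\{1,(\diam K)^{-1}\}>0$, which I would obtain by testing $L_j^*\alpha_j$ against two elements of $B_{\Lip(K)}$: the constant function $f\equiv1$, giving $\langle L_j^*\alpha_j,f\rangle=\alpha_j^2$ and hence $\Norm{L_j^*\alpha_j}{KR}\ge|\alpha_j^2|$, and the truncated distance $f(x)=\min\{|x-x_j|,1\}$, which vanishes at $x_j$ and yields $\Norm{L_j^*\alpha_j}{KR}\ge|\alpha_j^1|\min\{1,|x_j-y_j|^{-1}\}$; averaging the two produces the stated $c_0$, after which \eqref{targetbound} holds with the constant $C$ replaced by $Cc_0$.
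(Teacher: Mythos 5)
Your proposal is correct and follows essentially the same route as the paper: apply \cite[Theorem 3]{d2020atomic} to the operators of Lemma \ref{lem1}, identify $L_j^*\alpha_j=\frac{\delta_{x_j}-\delta_{y_j}}{|x_j-y_j|}\alpha_j^1+\delta_{x_j}\alpha_j^2$ through the duality pairing, and then upgrade the abstract estimate $\widetilde{C}\sum_j\Norm{L_j^*\alpha_j}{KR}\le\Norm{\mu}{KR}\le\sum_j\Norm{L_j^*\alpha_j}{KR}$ to \eqref{targetbound} by a per-summand two-sided comparison of $\Norm{L_j^*\alpha_j}{KR}$ with $|\alpha_j^1|+|\alpha_j^2|$, using Kantorovich--Rubinstein/Lipschitz duality for the lower bound. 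The one point where you genuinely diverge is the choice of test functions: the paper uses a single sign-adapted family $f_j(z)=\frac{\pm1\mp|x_j-z|}{d+1}$ with $d=\diam(K)$, which gives $\Norm{L_j^*\alpha_j}{KR}\ge\frac{1}{d+1}\left(|\alpha_j^1|+|\alpha_j^2|\right)$ in one stroke and hence $C=\widetilde{C}/(d+1)$, whereas you test separately against the constant function $1$ and the truncated distance $\min\{|z-x_j|,1\}$ and then average, obtaining $c_0=\frac12\min\{1,(\diam K)^{-1}\}$; this constant is slightly worse than the paper's when $d\ge 1$, but it is uniform in $j$ and perfectly sufficient, and it only uses the easy inequality $|\langle f,\nu\rangle|\le\Norm{f}{\Lip(K)}\Norm{\nu}{KR}$ rather than the full dual representation of the norm. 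Two expository slips worth fixing: your first paragraph asserts that the abstract theorem delivers \eqref{targetbound} directly, which it does not (its estimate involves $\Norm{L_j^*\alpha_j}{KR}$, not the coefficients --- your third paragraph correctly identifies and repairs exactly this); and the claim that the lower bound ``must be taken from the ambient quotient structure $\ell^1/P$ rather than from the individual summands'' is misleading, since your own direct argument (like the paper's) shows it can be read summand by summand once the uniform estimate $\Norm{L_j^*\alpha_j}{KR}\ge c_0\left(|\alpha_j^1|+|\alpha_j^2|\right)$ is available.
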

\begin{proof}
	By \cite[Theorem $3$]{d2020atomic} we know that there exist $\widetilde{C} \in (0,1)$ and $((a_j^1,a_j^2))_{j \in \N}\in \ell^1(\R^2)$ such that for any $\mu \in \fM(K)^c$
	\begin{equation*}
	\mu=\sum_{j=1}^{+\infty}L_j^*\alpha_j,
	\end{equation*}
	where $L_j^*$ is the adjoint operator of $L_j$, $\alpha_j=(\alpha_j^1,\alpha_j^2)\in \R^2$, and
	\begin{equation}\label{prebound}
	\widetilde{C}\sum_{j=1}^{+\infty}\Norm{L_j^*\alpha_j}{KR}\le \Norm{\mu}{KR}\le \sum_{j=1}^{+\infty}\Norm{L_j^*\alpha_j}{KR}.
	\end{equation}
	As in the proof of Theorem \ref{dec0}, we have
	\begin{equation*}
	L_j^*\alpha_j=\frac{\delta_{x_j}-\delta_{y_j}}{|x_j-y_j|}\alpha_j^1+\delta_{x_j}\alpha_j^2.
	\end{equation*}
	Now let us determine some upper and lower bounds for $\Norm{L^*_j \alpha_j}{KR}$. To do this, let us recall that
	\begin{equation*}
	\Norm{\delta_x-\delta_y}{KR}=\min\{|x-y|,2\}\le |x-y|, \qquad \Norm{\delta_x}{KR}=1 \ \forall x,y \in K.
	\end{equation*}
	Hence we have for the upper bound
	\begin{equation}\label{upb}
	\Norm{L^*_j \alpha_j}{KR}\le \frac{\Norm{\delta_{x_j}-\delta_{y_j}}{KR}}{|x_j-y_j|}|\alpha_j^1|+\Norm{\delta_{x_j}}{KR}|\alpha_j^2| \le |\alpha_j^1|+|\alpha_j^2|.
	\end{equation}
	Concerning the lower bound, let us recall (see \cite[Section $4.1$]{hanin1999extension}) that it holds
	\begin{equation}\label{dualrep}
	\Norm{L^*_j \alpha_j}{KR}=\sup_{\Norm{f}{\Lip(K)}\le 1}\left(\frac{f(x_j)-f(y_j)}{|x_j-y_j|}\alpha_j^1+f(x_j)\alpha_j^2\right).
	\end{equation}
	Let $d=\diam(K)$ and let us define the functions
	\begin{equation*}
	f_j(z;\alpha_j^1,\alpha_j^2)=\begin{cases}
	\frac{1-|x_j-z|}{d+1} & \alpha_j^1,\alpha_j^2 \ge 0\\
	\frac{1+|x_j-z|}{d+1} & \alpha_j^1<0\mbox{ and }\alpha_j^2 \ge 0\\
	\frac{-1-|x_j-z|}{d+1} & \alpha_j^1\ge 0\mbox{ and }\alpha_j^2 < 0\\
	\frac{-1+|x_j-z|}{d+1} & \alpha_j^1,\alpha_j^2<0.
	\end{cases}
\end{equation*}
By using this function as test function in \eqref{dualrep} we obtain
\begin{equation}\label{lob}
\Norm{L^*_j \alpha_j}{KR}\ge \frac{1}{d+1}(|\alpha_j^1|+|\alpha_j^2|)
\end{equation}
Using Equations \eqref{upb} and \eqref{lob} in Equation \eqref{prebound} and setting $C=\frac{\widetilde{C}}{d+1}$ we finally achieve Equation \eqref{targetbound}.
\end{proof}
\begin{rmk}
	Let us observe that the sequence of $\delta$-atoms $(\mu_j)_{j \in \N}$ is composed by delta measures and dipoles. In particular if $j$ is even, then $\mu_j$ is a delta measure and then the cardinality of its support is exactly $1$. On the other hand, if $j$ is odd, then $\mu_j$ is a dipole and then the cardinality of its support is exactly $2$. Thus we have that for any functional $\mu \in \fM(K)^c$ there exists a sequence $(\alpha_j)_{j \in \N}\in \ell^1(\R)$ such that $\mu=\sum_{j=1}^{+\infty}\alpha_j\mu_j$ where $\mu_j$ are $\delta$-atoms with support of cardinality at most $2$. \\
	We still have a $\ell^1/P$-atomic decomposition of $\fM(K)^c$. However, in this case, the atoms $\mu_j$ are such that $\Norm{\mu_j}{KR}\le 1$. In particular, if $\diam K\le 2$, we obtain again $\Norm{\mu_j}{KR}=1$ for any $j \in \N$, while, in general, this is true only for even $j$. Let us also observe that to obtain the lower bound in this case, Kantorovich-Rubinstein duality for the norm on $\fM(K)^c$ (see \cite{hanin1999extension}) is actually the main tool.
\end{rmk}
\begin{rmk}\label{rmkinf}
	Let us stress that both inequalities \eqref{bound} and \eqref{targetbound} hold true for respectively a certain sequence $(\alpha_j)_{j \in \N}\in \ell^1(\R)$ and $((\alpha_j^1,\alpha_j^2))_{j \in \N}\in \ell^1(\R^2)$. In particular, setting $\mu \in \fM_0(K)^c$, inequality \eqref{bound} is not necessarily valid for any sequence $(\alpha_j)_{j \in \N}\in\ell^1(\R)$ such that $\mu=\sum_{j=1}^{+\infty}\frac{\delta_{x_j}-\delta_{y_j}}{|x_j-y_j|}\alpha_j$ in $KR_0$. The same holds for \eqref{targetbound}.
\end{rmk}
\begin{rmk}
		Let us observe that if $\mu \in \fM_0(K)^c$ and \linebreak $$\mu=\sum_{j=1}^{+\infty}\left(\frac{\delta_{x_j}-\delta_{y_j}}{|x_j-y_j|}\alpha_j^1+\delta_{x_j}\alpha_j^2\right),$$then $\sum_{j=1}^{+\infty}\alpha_j^2=0$. This is a direct consequence of the fact that $\mu(K)=0$.
\end{rmk}
A similar property holds for any $\mu \in \fM(K)^c$, as we can see from the following Proposition.
\begin{prop}
		Let $\mu \in \fM(K)^c$ and $((\alpha_j^1,\alpha_j^2))_{j \in \N}\in \ell^1(\R^2)$ be the sequence defined in Theorem \ref{dec0}. Suppose $((\beta_j^1,\beta_j^2))_{j \in \N}\in \ell^1(\R^2)$ is another sequence such that
		\begin{equation*}
		\mu=\sum_{j=1}^{+\infty}\left(\frac{\delta_{x_j}-\delta_{y_j}}{|x_j-y_j|}\beta_j^1+\delta_{x_j}\beta_j^2\right)
		\end{equation*}
		and inequalities \eqref{targetbound} hold. Then
		\begin{equation*}
		\sum_{j=1}^{+\infty}(\alpha_j^2-\beta_j^2)=0
		\end{equation*}
	\end{prop}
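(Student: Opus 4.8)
The plan is to identify both $\sum_{j}\alpha_j^2$ and $\sum_{j}\beta_j^2$ with the total mass of $\mu$, read off through the duality $(\fM(K)^c)^*\simeq \Lip(K)$. The crucial observation is that the constant function $\mathbf 1\equiv 1$ belongs to $\Lip(K)$, with $\Norm{\mathbf 1}{\Lip(K)}=1$, and therefore defines a bounded linear functional $\mu\mapsto\langle \mathbf 1,\mu\rangle$ on $\fM(K)^c$. For a genuine measure this functional is exactly $\mu\mapsto\mu(K)$, so it is the natural continuous extension of total mass to the completion; this is the same mechanism underlying the preceding remark, where $\mu\in\fM_0(K)^c$ forces $\mu(K)=0$.

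First I would record the action of $\langle\mathbf 1,\cdot\rangle$ on the two families of $\delta$-atoms. Against the dipole part one has
$$\left\langle \mathbf 1,\frac{\delta_{x_j}-\delta_{y_j}}{|x_j-y_j|}\right\rangle=\frac{1-1}{|x_j-y_j|}=0,$$
while against the atom part $\langle\mathbf 1,\delta_{x_j}\rangle=1$. Thus the contribution of each summand of either representation to $\langle\mathbf 1,\cdot\rangle$ reduces to its second coefficient, $\alpha_j^2$ respectively $\beta_j^2$.

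Next, since both series converge to $\mu$ in the $KR$ norm (the $\beta$-series does so by hypothesis, consistently with the $\ell^1(\R^2)$ summability of its coefficients and the bound $\Norm{\mu_j}{KR}\le 1$) and since $\mathbf 1\in(\fM(K)^c)^*$, I would pass the bounded functional $\langle\mathbf 1,\cdot\rangle$ through the limit of partial sums and evaluate term by term, obtaining
$$\langle\mathbf 1,\mu\rangle=\sum_{j=1}^{+\infty}\alpha_j^2\qquad\text{and}\qquad\langle\mathbf 1,\mu\rangle=\sum_{j=1}^{+\infty}\beta_j^2,$$
both series being absolutely convergent because the coefficient sequences lie in $\ell^1(\R^2)$. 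Subtracting the two identities yields $\sum_{j=1}^{+\infty}(\alpha_j^2-\beta_j^2)=0$.

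The only point requiring care, and which I regard as the heart of the argument, is the justification of the term-by-term evaluation: it rests precisely on the continuity of $\langle\mathbf 1,\cdot\rangle$ as an element of the dual $(\fM(K)^c)^*\simeq\Lip(K)$ together with the $KR$-convergence of the two defining series, which together make the interchange of the functional with the infinite sum legitimate. Everything else is the elementary computation above; in particular the hypothesis that inequalities \eqref{targetbound} hold is used only to guarantee that the $\beta$-representation is a bona fide $KR$-convergent expansion of the same $\mu$.
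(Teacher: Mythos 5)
Your proof is correct, and it takes a genuinely more streamlined route than the paper's. The paper's proof forms the difference of the two partial-sum sequences, $\nu_N=\mu^\alpha_N-\mu^\beta_N$, shows via the $\ell^1$ summability of the coefficients that $(\nu_N)_N$ is Cauchy in $\fM(K)^c$, identifies its limit as $0$, and only then extracts the conclusion by applying to that series representation of $0$ the inequality $\Norm{\nu}{KR}\ge \left|\nu(K)\right|$ (cited as Equation $1.18$ of Hanin). Your argument instead works with each representation separately: you observe that the constant function $\mathbf{1}\in \Lip(K)$, under the isometric duality $(\fM(K)^c)^*\simeq\Lip(K)$ stated in Section 2 of the paper, defines a bounded functional that kills every dipole and returns the coefficient of every $\delta_{x_j}$, and you pass it through the two $KR$-convergent series. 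The two proofs rest on the very same underlying fact --- Hanin's inequality is precisely the statement that $\langle\mathbf{1},\cdot\rangle$ has dual norm at most $1$ on $(\fM(K),\Norm{\cdot}{KR})$ --- but your formulation makes that functional explicit and thereby dispenses with the Cauchy-sequence machinery and the identification of the limit of $\nu_N$, which is where most of the length of the paper's proof goes. You are also right that the hypothesis \eqref{targetbound} plays no real role beyond guaranteeing a bona fide $KR$-convergent expansion; the same is true in the paper's proof, where only the $\ell^1(\R^2)$ summability of both coefficient sequences is actually used.
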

	\begin{proof}
		Let us define the following measures for $N \in \N$:
		\begin{align*}
		\mu^\alpha_N&=\sum_{j=1}^{N}\frac{\delta_{x_j}-\delta_{y_j}}{|x_j-y_j|}\alpha_j^1+\delta_{x_j}\alpha_j^2\\
		\mu^\beta_N&=\sum_{j=1}^{N}\frac{\delta_{x_j}-\delta_{y_j}}{|x_j-y_j|}\beta_j^1+\delta_{x_j}\beta_j^2\\
		\nu_N&=\mu^\alpha_N-\mu^\beta_N=\sum_{j=1}^{N}\frac{\delta_{x_j}-\delta_{y_j}}{|x_j-y_j|}(\alpha^1_j-\beta^1_j)+\delta_{x_j}(\alpha_j^2-\beta_j^2).
		\end{align*}
		First of all, let us observe that both $\mu^\alpha_N$ and $\mu^\beta_N$ converge in $KR$ norm towards $\mu$.\\
		Now let us observe that
		\begin{align*}
		\sum_{j=1}^{N}\Norm{\frac{\delta_{x_y}-\delta_{y_j}}{|x_j-y_j|}(\alpha^1_j-\beta^1_j)+\delta_{x_j}(\alpha_j^2-\beta_j^2)}{KR}&\le \sum_{j=1}^{N}(|\alpha^1_j-\beta^1_j|+|\alpha^2_j-\beta^2_j|)\\&\le \sum_{j=1}^{N}(|\alpha^1_j|+|\alpha^j_2|)+\sum_{j=1}^{N}(|\beta_j^1|+|\beta_j^2|).
		\end{align*}
		Taking the limit as $N \to +\infty$ we obtain that the series in the left-hand side converges and in particular
		\begin{equation*}
		\sum_{j=1}^{+\infty}\Norm{\frac{\delta_{x_y}-\delta_{y_j}}{|x_j-y_j|}(\alpha_j-\beta_j)}{KR}\le \frac{2}{C}\Norm{\mu}{KR}.
		\end{equation*}
		Now let us consider $M>N>0$ in $\N$ and observe that
		\begin{align*}
		\Norm{\nu_N-\nu_M}{KR}&=\Norm{\sum_{j=N+1}^{M}\frac{\delta_{x_y}-\delta_{y_j}}{|x_j-y_j|}(\alpha^1_j-\beta^1_j)+\delta_{x_j}(\alpha^2_j-\beta^2_j)}{KR}\\
		&\le \sum_{j=N+1}^{M}\Norm{\frac{\delta_{x_y}-\delta_{y_j}}{|x_j-y_j|}(\alpha^1_j-\beta^1_j)+\delta_{x_j}(\alpha^2_j-\beta^2_j)}{KR}.
		\end{align*}
		In particular $(\nu_N)_{N \ge 0}$ is a Cauchy sequence in the Banach space $\fM(K)^c$, thus it admits a limit $\nu \in \fM(K)^c$ given by
		\begin{equation*}
		\nu=\sum_{j=1}^{+\infty}\left(\frac{\delta_{x_y}-\delta_{y_j}}{|x_j-y_j|}(\alpha^1_j-\beta^1_j)+\delta_{x_j}(\alpha^2_j-\beta^2_j)\right).
		\end{equation*}
		Now we need to identify $\nu$. To do this, let us just observe that
		\begin{equation*}
		\nu=\lim_{N \to +\infty}\nu_N=\lim_{N\to +\infty}(\mu^\alpha_N-\mu^\beta_N)=\mu-\mu=0,
		\end{equation*}
		and then we have
		\begin{equation}\label{zerorep}
		0=\sum_{j=1}^{+\infty}\left(\frac{\delta_{x_y}-\delta_{y_j}}{|x_j-y_j|}(\alpha^1_j-\beta^1_j)+\delta_{x_j}(\alpha^2_j-\beta^2_j)\right).
		\end{equation}
		However, we have, by \cite[Equation $1.18$]{hanin1999extension}
		\begin{equation*}
		0=\Norm{0}{KR}=\Norm{\sum_{j=1}^{+\infty}\left(\frac{\delta_{x_y}-\delta_{y_j}}{|x_j-y_j|}(\alpha^1_j-\beta^1_j)+\delta_{x_j}(\alpha^2_j-\beta^2_j)\right)}{KR}\ge \left|\sum_{j=1}^{+\infty}(\alpha^2_j-\beta^2_j)\right|,
		\end{equation*}
		concluding the proof.
	\end{proof}
Let us observe that the same strategy does not lead to uniqueness of the coefficients. Indeed Equation \eqref{zerorep} does not imply $\sum_{j=1}^{+\infty}(|\alpha_j^1-\beta_j^1|+|\alpha_j^2-\beta_j^2|)=0$, in view of Remark \ref{rmkinf}.

\subsection*{Acknowledgements}
We thank the referee for his/her precious advices and critiques, as they showed the way for a cleaner and more efficient exposition of the topic.
\bibliographystyle{plain}
\bibliography{biblip}
\end{document}